\documentclass[twocolumn]{autart}  
\usepackage{graphicx}
\usepackage{epsfig}
\usepackage{epstopdf}
\usepackage{amsmath,amssymb}
\usepackage{xcolor,color}
\usepackage{float}
\newtheorem{remark}{Remark}
\newcommand{\X}{{\bar{x}}}
\newcommand{\A}{{\bar{A}}}
\newcommand{\B}{{\bar{B}}}
\newcommand{\Hb}{{\bar{H}}}
\newcommand{\E}{{\mathbb{E}}}
\newcommand{\AC}{{\mathcal{A}}}
\newcommand{\I}{{\mathcal{I}}}

\newcommand{\Sb}{{\mathbb{S}}}
\newcommand{\Pb}{{\bar{P}}}
\newcommand{\CL}{{\mathcal{J}}}
\newcommand{\C}{{\bar{C}}}
\newcommand{\OM}{{\bar{O}}}

\DeclareMathOperator{\rank}{rank}
\newenvironment{proof}
{\noindent\textbf{Proof.}}
{\hfill$\blacksquare$\par}

\begin{document}

\begin{frontmatter}

\title{Risk Sensitive Filtering for Singular Systems subject to Round-Robin Protocol \thanksref{footnoteinfo}} 

\thanks[footnoteinfo]{This paper was not presented at any IFAC 
meeting. Corresponding author N.~K.~Tomar. \ead{nktomar@iitp.ac.in}}

\author[add1]{Ashna Goel}\ead{ashna\_2021ma10@iitp.ac.in}, 
\author[add2]{Shovan Bhaumik}\ead{shovan.bhaumik@iitp.ac.in},     
\author[add1]{Nutan Kumar Tomar}\ead{nktomar@iitp.ac.in} 

\address[add1]{Department of Mathematics, Indian Institute of Technology Patna, India} \address[add2]{Department of Electrical Engineering, Indian Institute of Technology Patna, India}      

\begin{keyword}       
Singular systems; Networked control systems; Round-robin protocol; Weierstrass canonical form; Kalman filtering; Risk sensitive filtering
\end{keyword}    

\begin{abstract} 
This paper develops a risk sensitive (RS) Kalman filtering framework for discrete-time linear stochastic singular systems operating under communication constraints imposed by a round-robin protocol. Due to limited network bandwidth, only a subset of the available measurements can be transmitted at each sampling instant, resulting in a periodically varying measurement structure. By employing the Weierstrass canonical form (WCF), the singular system is transformed into an equivalent augmented state space model, yielding a round-robin induced periodic system (RRIPS). A recursive risk sensitive Kalman filter (RSKF) is then developed for the RRIPS through a Bayesian formulation and the minimization of an exponential quadratic cost function, from which the recursive filtering equations are obtained for the original singular system. To enhance robustness against modeling uncertainties and disturbances, an adaptive RS mechanism is introduced in which the risk parameter is adjusted online according to the available covariance information. This adaptive strategy guarantees the positive definiteness of the predicted covariance matrix while adjusting the degree of risk sensitivity to the prevailing estimation uncertainty. Furthermore, sufficient conditions ensuring the filter stability are established using the observability and controllability concepts of periodic systems. The proposed framework reduces to the standard KF for singular systems when the RS parameter vanishes and recovers the standard RSKF when the singular matrix reduces to the identity matrix. Finally, numerical results are presented to demonstrate the effectiveness, robustness, and improved estimation performance of the proposed approach in comparison with the standard KF.
\end{abstract}

\end{frontmatter}

\section{Introduction} \label{sec:intro}
State estimation is a fundamental task in modern control and signal processing systems, where the complete state vector is often inaccessible through direct measurement. For linear stochastic systems represented in the state space framework, the KF has emerged as one of the most widely adopted estimation techniques due to its recursive structure and optimal minimum mean square error performance under Gaussian noise assumptions \cite{kalman1960new}, \cite{anderson2012optimal}. Consequently, KF based methods have found widespread applications in aerospace systems \cite{schmidt1981kalman}, robotics \cite{hartley2020contact}, industrial automation \cite{garriz2019development}, and power networks \cite{hoffmann2013minimal}. With the increasing integration of communication technologies into control architectures, many practical systems now rely on shared communication networks to exchange information among sensors, controllers, and actuators, leading to the emergence of Networked Control Systems (NCS) \cite{gupta2009networked}. Compared with conventional control architectures, NCSs offer several advantages, such as lower installation and maintenance costs, improved flexibility, ease of system expansion, and support for geographically distributed sensing and control \cite{zhang2001stability}, \cite{zhang2019networked}.

Despite these benefits, communication networks introduces several challenges that can degrade estimation and control performance. Typical network-induced effects include transmission delays \cite{zhang2012network}, packet dropouts \cite{wu2007design}, quantization effects \cite{xia2011analysis}, and bandwidth limitations \cite{wang2023review}. In particular, when multiple sensors nodes share a common communication channel, it is generally impossible to transmit all measurement data simultaneously. To regulate network access and efficiently utilize communication resources, various scheduling protocols have been proposed, including the Round-Robin Protocol (RRP) \cite{fan2025distributed}, the Try-Once-Discard (TOD) protocol \cite{wang2021observer}, and other deterministic or stochastic scheduling schemes \cite{zou2016observer}. Among these, the RRP is particularly attractive due to its simplicity, fairness, and deterministic periodic structure, in which sensor measurements are transmitted sequentially according to a predefined schedule. 

Motivated by the challenges arising in NCSs, considerable research efforts have been devoted to state estimation under round-robin scheduling. Existing studies have investigated the dynamics by round-robin induced systems by modeling them as delayed switched system and establishing stability conditions using Lyapunov-based methods \cite{liu2016quantized}. The observability and exponential stability of the RRIPS have also been examined in \cite{zhang2006communication}. Furthermore, state estimation problem for NCSs subject to packet disorders under the round-robin scheduling, as well as sensor scheduling schemes have been addressed in \cite{liu2021recursive}, \cite{shi2011sensor}. Building upon these developments, several estimation algorithms have been developed for remotely available measurements \cite{liu2015robust}, \cite{gao2007cal}. 

Despite their effectiveness, KF based estimators rely heavily on accurate system models and noise statistics. In the presence of modeling uncertainties, parameter variations, and unmodeled dynamics their estimation performance may deteriorate significantly. This limitation has motivated the development of robust estimation methods, among which the $H_\infty$ filter and the RSKF are particularly prominent. While the $H_\infty$ filter attenuates the worst-case effect of disturbances, the RSKF minimizes an exponential quadratic cost function that imposes a larger penalty on large estimation errors \cite{sadhu2009particle}, \cite{xie1994robust}. The foundations of RS estimation were established by the pioneering works of \cite{ho1964bayesian}, while its robustness properties were rigorously established in \cite{boel2002robustness}. Owing to its enhanced robustness against model uncertainty, RS filtering has attracted considerable attention in stochastic and networked systems, and has been extended to systems with delayed measurements and communication constraints \cite{tiwari2022risk}. Nevertheless, compared with the extensive literature on Kalman and $H_\infty$ filtering for NCSs, RS filtering under explicit communication scheduling protocols, particularly the RRP, remains relatively unexplored.

While the aforementioned studies primarily focused on state space systems, many practical processes are more naturally described by singular or descriptor systems. Such systems arise in the presence of algebraic constraints coupled with dynamic equations and are frequently encountered in electrical circuits, power systems, constrained mechanical systems, and chemical processes \cite{belov2018control}, \cite{goel2025state}. Due to their ability to capture both dynamic and algebraic relationships, singular systems provide a more general modeling framework, with a special case of state space systems. However, the singular structure introduces additional challenges in state estimation, since the standard Kalman filtering recursion is not directly applicable. Following the pioneering work of \cite{dai1987state}, extensive research has been devoted to state estimation for singular systems including Kalman filtering \cite{dai1989filtering}, unbiased minimum variance estimation \cite{yu2019optimal}, observer design \cite{hou2002observer}, and robust $H_\infty$ filtering \cite{yue2004robust}. These studies have established a solid theoretical foundation for robust filtering of singular systems under uncertainty.

The state estimation problem becomes even more challenging when singular systems operate over communication network. Consequently, increasing attention has been directed toward state estimation for networked singular systems. Existing works have addressed estimation problems in the presence of packet dropouts, communication delays, missing measurements, and limited bandwidth \cite{feng2011descriptor}. Distributed filtering, fusion estimation, and $H_\infty$ filtering methods for networked singular systems have also been reported in \cite{li2013h}, \cite{zheng2023distributed}. More recently, a Kalman-type recursive estimator was developed for singular systems operating under a round-robin communication protocol, where measurements are transmitted sequentially to accommodate bandwidth limitations \cite{goel2024kalman}. 

Despite the significant advances in state estimation for NCSs and networked singular systems, an important research gap still exists. Although RS filtering has been extensively studied for state space systems and shown to provide enhanced robustness against modeling uncertainties and disturbances, its extension to networked singular systems remains largely unexplored \cite{zhang2003risk}. Moreover, existing estimation results for singular systems operating under round-robin scheduling are largely confined to the standard Kalman filtering framework and minimum variance estimation \cite{wan2018h_}, \cite{li2024secure}. To the best of the authors' knowledge, RS filtering for singular systems subject to RRP has not yet been investigated. This gap is particularly important because networked singular systems are simultaneously affected by communication constraints and the inherent complexities of singular systems.

Motivated by this gap, this paper develops a RS Kalman filtering framework for discrete-time stochastic singular systems operating under RRP. By employing the WCF, the original singular system is transformed into an equivalent RRIPS, for which a recursive RSKF is derived using a Bayesian formulation and an exponential quadratic cost criterion. Furthermore, an adaptive RS mechanism is introduced to improve robustness against modeling uncertainties, and sufficient conditions are established for filter boundedness and stability. Numerical examples are provided to demonstrate the effectiveness of the proposed approach in comparison with the standard KF.

The remainder of the paper is organized as follows. Section \ref{sec:probform} presents the necessary preliminaries on singular systems and formulates the state estimation problem under RRP. Section \ref{sec:rskf} develops the proposed RS Kalman filtering framework for singular systems subject to RRP. The observability and stability properties of the proposed filter are analyzed in Section \ref{sec:stability}. Numerical examples illustrating the effectiveness of the proposed filter are provided in Section \ref{sec:simu}. Finally, concluding remarks and future research directions are presented in Section \ref{sec:concl}.

\section{Problem Formulation} \label{sec:probform}
\subsection{System transformation} \label{subsec:systrans}
Consider the class of discrete-time stochastic singular systems described by
\begin{subequations} \label{sls}
	\begin{align}
E x_{k+1} &= (A+\Delta A) x_k + B w_k, \label{slsa} \\
	y_k &= Hx_k + v_k,\label{slsb}
	\end{align}
\end{subequations}
where  $E \in \mathbb{R}^{n \times n}$ is a singular matrix with $\rank(E)<n$, and $A \in \mathbb{R}^{n \times n}$, $B \in \mathbb{R}^{n \times r}$ and $H \in \mathbb{R}^{q \times n}$ are known constant matrices. The vectors $x_k \in \mathbb{R}^n$ and $y_k \in \mathbb{R}^q$ denote the system state and measurement output at time instant $k$, respectively. The vectors $w_k \in \mathbb{R}^r$ and $v_k \in \mathbb{R}^q$ represent the process and measurement noises, respectively. It is assumed that the uncertainty term $\Delta A$ is deterministic but unknown to the estimator. As a result, the filter has access only to the nominal dynamics and uses the model of the form
\begin{equation} \label{nominalsls}
E x_{k+1}=Ax_k+Bw_k.
\end{equation}
In contrast, the measurement equation is presumed to be known without uncertainty and is therefore same as \eqref{slsb}. Owing to the singularity of the matrix $E$, the vector $x_k$ cannot be regarded as a conventional state vector for arbitrary initial conditions, \emph{cf}. Eq.~\eqref{fsol}. However, $x_k$ completely characterizes the system behavior at time instant $k$. An initial condition is said to be consistent if there exists a solution trajectory satisfying \eqref{slsa} and \eqref{nominalsls}. The following assumptions are adopted throughout the paper.

\noindent \textbf{Assumption $1$}: The matrix pair $(E, A)$ is regular, \emph{i.e.}, $\det(\lambda E-A)\not\equiv 0$, where $\det(\lambda E-A)$ is not identically zero as a polynomial in $\lambda$.

\noindent \textbf{Assumption $2$}: The process noise $w_k$ and measurement noise $v_k$ are assumed to be zero mean Gaussian random vectors satisfying $\E[w_k{w_k}^\top]=Q_k, \; \E[v_k{v_k}^\top]=R_k, \; \text{and}  ~\E[w_k{v_j}^\top]=0 \enspace \forall k$ and $j$.

\noindent \textbf{Assumption $3$}: The initial state $x_0$ is assumed to be a zero-mean random vector with known covariance. Moreover, $x_0$ is independent of $v_k$, but may be correlated with $w_0$ and a finite number of future noise terms $w_{k+l}$ due to the non-causal nature of the system. 

Under Assumption $1$, the regularity of the matrix pair $(E,A)$ ensures the existence of a unique solution to \eqref{slsa} for every consistent initial condition \cite{duan2010analysis}. It is well known that the matrix pair $(E, A)$ is regular if and only if there exist non-singular matrices $U, V \in \mathbb{C}^{n \times n}$ such that 
	\begin{equation}\label{WCF}
	UEV =\begin{bmatrix}
		I_{n_1} & 0 \\
		0 & N
	\end{bmatrix} \text{\;and\;} UAV=\begin{bmatrix}
		A_1 & 0 \\
		0 & I_{n_2}
	\end{bmatrix},
	\end{equation}
where $n_1$ is the degree of the polynomial $\det(\lambda E-A)$, $n_2 = n - n_1$, $A_1 \in \mathbb{R}^{n_1 \times n_1}$ and $N \in \mathbb{R}^{n_2 \times n_2}$ is a nilpotent matrix with nilpotency index $h$, namely, $N^h = 0$. The above form \eqref{WCF} is referred to as the WCF of a matrix pair $(E, A)$.  The reader is referred to \cite{berger2012quasi} for more details on the WCF and the construction of the transformation matrices $U$ and $V$.

Let $UB=\begin{bmatrix}
	B_{1}^\top  &
	B_{2}^\top
\end{bmatrix}^\top$ and $ HV=\begin{bmatrix}
	H_{1} & H_{2}
\end{bmatrix}$, where the partitions conform to the dimensions $n_1$ and $n_2$. By introducing the coordinate transformation $x_{k} = V \begin{bmatrix}
	x_{1,k} \\
	x_{2,k}
\end{bmatrix}$, the system \eqref{sls} can be decomposed into
\begin{subequations}\label{wcfsys}
	\begin{align}
		x_{1,k+1} &= A_{1} x_{1,k}+B_{1} w_k,\label{wcfsysa}\\
		N x_{2,k+1} &= x_{2,k}+B_{2} w_k, \label{wcfsysb}\\
		y_k &= H_{1} x_{1,k}+H_{2} x_{2,k}+v_k, \label{wcfsysc}
	\end{align}
\end{subequations} 
where $x_{1,k} \in \mathbb{R}^{n_1}$ and $x_{2,k} \in \mathbb{R}^{n_2}$. The subsystem \eqref{wcfsysa} represents the dynamic part of the system in state space form, whereas subsystem \eqref{wcfsysb} represents its algebraic component. Due to the nilpotency of $N$, the subsystem \eqref{wcfsysb} admits an explicit solution \cite[pp.~47]{belov2018control}. As a result, the solutions of \eqref{wcfsysa} and \eqref{wcfsysb} can be expressed as
\begin{subequations}
	\begin{align}
		x_{1,k} &= A_{1}^{k} x_{1,0} + \sum_{i = 0}^{k-1}A_{1}^{k-i-1}B_1 w_i,\label{ssol}\\
		x_{2,k} &= -\sum_{i = 0}^{h-1}N^{i}B_2 w_{k+i}. \label{fsol}
	\end{align}
\end{subequations} 

By combining the expressions for $x_{1,k}$ and $x_{2,k}$, the system \eqref{wcfsys} can be transformed into the following system \cite{zhang2003risk} 
\begin{subequations} \label{asys}
\begin{align}
	\X_{k+1} &= \A \X_{k} + \B(z) w_{k}, \label{asysa}\\
	y_k &= \Hb \X_{k} + v_{k}, \label{asysb}
\end{align}
\end{subequations}
where $\X_k = \begin{bmatrix}
    x_{1,k} \\
    x_{2,k}
\end{bmatrix}$ and the matrices are
\begin{eqnarray*}
\A &=& \begin{bmatrix}
	A_1 & 0\\
	0 & 0
\end{bmatrix}, \; \Hb = \begin{bmatrix}
	H_{1} & H_2
\end{bmatrix}, \\
\B(z) &=& B^{(0)} + B^{(1)}z + \dots + B^{(h)}z^h , \\
B^{(0)} &=& \begin{bmatrix}
    B_1 \\
    0
\end{bmatrix}, B^{(i)} = -\begin{bmatrix}
    0 \\
    N^{i-1}B_2
\end{bmatrix}, i = 1,2,\dots,h
\end{eqnarray*}
and $z$ is the forward shift operator, \emph{i.e.} $z w_k = w_{k+1}$.

\begin{remark} \label{rem1}
It should be noted that the process noise covariance associated with the model \eqref{asys} is given by $\bar{Q}_k = \E[(\B(z)w_k)(\B(z)w_k)^\top]$. By expanding $\B(z)w_k$, the equivalent covariance can be expressed as $\bar{Q}_k = \sum \limits_{i=0}^{h}\sum \limits_{j=0}^{h}
B^{(i)}
\E[w_{k+i}w_{k+j}^\top]
(B^{(j)})^\top$ which is equivalent to $\bar{Q}_k =
\sum \limits_{i=0}^{h} B^{(i)}Q_{k+i}(B^{(i)})^\top$. Therefore, despite the presence of forward shift operator and non-causal structure of \eqref{asys}, the process noise covariance admits an equivalent matrix representation.
\end{remark}

\begin{remark}\label{rem2}
It follows from \eqref{asys}, that the resulting model is non-singular but non-causal. For the causal subsystem \eqref{wcfsysa}, the initial state $x_{1,0}$ is assumed to be a zero-mean random vector with covariance $P_1$, and is uncorrelated with the noise sequences $w_k$ and $v_k$. In contrast, \eqref{wcfsysb} describes the non-causal subsystem. From \eqref{fsol}, the corresponding initial state can be expressed as $x_{2,0} = -\sum \limits_{i = 0}^{h-1}N^{i}B_2 w_{i}$. Hence, $x_{2,0}$ is a zero mean random vector with covariance $\E[x_{2,0}x_{2,0}^{\top}]
= P_2 = \sum \limits_{i=1}^{h-1}
N^{i-1}B_2 Q_i (N^{i-1}B_2)^\top.$ Since $x_0
= V\bar{x}_0 = V\begin{bmatrix}
x_{1,0} \\
x_{2,0}
\end{bmatrix},$
the initial state $x_0$ is also zero mean with covariance $\E[x_0x_0^\top] =
V \operatorname{diag}\{P_1,P_2\} V^{\top}.$
Furthermore, due to the non-causal structure in \eqref{fsol}, the initial state $x_0$ is correlated with process noises $w_0, w_1, \dots, w_{h-1}$. This property distinguishes singular systems from state space systems, where the initial state is typically assumed to be independent of both process and measurement noises.
\end{remark}

\subsection{Measurement transmitted with round-robin protocol} \label{subsec:rrp}
In this paper, the estimation center is assumed to be far from the sensors, and the measurement data are transmitted through a shared communication network. Due to the limited bandwidth of the communication channel,  it is assumed that the complete measurement vector $y_k$ cannot be transmitted at any instant $k$. To address this limitation, the measurement vector is partitioned, and each part of the measurements is transmitted according to a predefined schedule. Specifically, a round-robin protocol is adopted to overcome the communication capacity constraints. 

Suppose that, at each time instant $k$, the communication channel can transmit only $q'$ measurement components out of the total $q$ components. Define the period of RRP as $\sigma =\lceil\frac{q}{q'}\rceil$, where $\lceil . \rceil$ denotes a ceiling function that returns the smallest successive integer. Then, the transmission schedule is characterized by the periodic index as  $m_k = \bmod (k, \sigma)+1$, where $\bmod (k, \sigma)$ represents the non-negative remainder obtained upon dividing $k$ by $\sigma$. Since $m_k\in\phi_{\sigma}$ with $\phi_l=\{1,2,\ldots,l\}$, the index $m_k$ cycles through the set $\phi_{\sigma}$ in a periodic manner. Thus, based on the capacity of the network channel, the measurement vector $y_k$ is partitioned into subsets, and only the subset associated with the period $\sigma$ and index $m_k$ is transmitted.  Finally, we create partitions $y_{k}^{m_k}$ of the measurement vector $y_k$ as follows:
\begin{equation}\label{partition}
y_k^{m_k}=
\begin{cases}
\begin{bmatrix}
y_{k,a_1} & y_{k,a_2} & \cdots & y_{k,a_{q'}}
\end{bmatrix}^{\top}, & m_k\in\phi_{\sigma-1},\\
\begin{bmatrix}
y_{k,a_1} & y_{k,a_2} & \cdots & y_{k,q}
\end{bmatrix}^{\top}, & m_k=\sigma.
\end{cases}
\end{equation}
where $j = 1,2,\dots,q'$, $a_j=(m_k-1)q'+j$, and $y_{k,a_j}$ is the $a_j^{th}$ component of the vector $y_k$. It should be noted that the partition corresponding to $m_k = \sigma$ may contain fewer than $q'$ measurement components. In this situation, additional entries selected from the set $\{y_{k,1},y_{k,2},\ldots,y_{k,(\sigma-1)q'}\}$ are appended so that the dimension of $y_k^{\sigma}$ becomes $q'$ \cite{xu2022extended}.

Now, based on the communication capacity of the network channel, the period $\sigma$ and the scheduling index $m_k$ are specified. Subsequently, the measurement vector $y_k$ in \eqref{asys} is partitioned into subsets $y_k^{m_k}$ as defined in \eqref{partition}. Thus the system \eqref{asys} can be represented by the following round-robin induced periodic system (RRIPS):
\begin{subequations} \label{RRIPS}
\begin{align}
\X_{k+1} &= \A \X_{k} + \B(z) w_{k}, \label{RRIPSa}\\
y_k^{m_k} &= \Hb_{m_k}  \X_{k} + v_k^{m_k}, \label{RRIPSb}
\end{align}
\end{subequations} 
where $v_k^{m_k}$ is formed by partitioning $v_k$ in the same manner that as $y_k^{m_k}$ is generated from $y_k$, and $\Hb_{m_k}$ consists of the corresponding rows of $\Hb$, \emph{i.e.}, 
$$
\begin{aligned}
\Hb_{m_k} =
\begin{cases}
[\Hb_{m_1}^\top \; \Hb_{m_2}^\top \; \dots \; \Hb_{m_{q'}}^\top]^\top,
& \text{for } m_k \in \phi_{\sigma-1}, \\[2mm]
[\Hb_{m_1}^\top \; \dots \; \Hb_{q}^\top]^\top,
& \text{for } m_k = \sigma.
\end{cases}
\end{aligned}
$$
where $\Hb_{m_j}$ is the $m_j^{th}$ row of matrix $\Hb$.

\section{Risk Sensitive State Estimation} \label{sec:rskf}
The objective of this paper is to construct a recursive optimal estimate $\hat{x}_{k|k}$ using the available measurements $y_{1:k}$. To achieve this goal, the estimation problem is formulated for the RRIPS \eqref{RRIPS}, from which the estimate of the system \eqref{sls} can subsequently be obtained. Within the RS framework, the estimator is obtained by minimizing an exponential cost that penalizes estimation errors according to prescribed risk parameters. Accordingly, the optimal estimate at each time step is obtained by minimizing the cost function  
\begin{equation} 
\begin{aligned}
\CL_k(\hat{\X}_{k | k} | y_{1: k}^{m_k})&=\E [\exp (\sum_{i=1}^{k-1} \mu_{1,i}\rho_1(e_{i|i}) +\mu_{2,k}\rho_2(e_{k|k})], \label{jkcost}
   \end{aligned}
\end{equation}
where $\hat{\X}_{i|i}$ denotes the posterior estimate and $e_{i|i} = \X_i-\hat{\X}_{i|i}$ denotes the posterior estimation error at time instant $i$. The parameters $\mu_1\ge 0$ and $\mu_2>0$ characterize the degree of risk sensitivity associated with past and current estimation errors, respectively. Moreover, the functions $\rho_1(\cdot)$ and $\rho_2(\cdot)$ are both strictly convex, continuous and bounded from below, attaining global minima at $0$. In particular, the minimum RS estimate (MRSE) is defined by
\begin{equation} \label{argmin}
    \hat{\X}_{k|k}^\star=\arg \min _{\hat{\X}_{k|k}} \CL_k(\hat{\X}_{k|k}|y_{1:k}^{m_k}).
\end{equation}
In this paper, to obtain a recursive filtering algorithm, the convex functions $\rho_1(\cdot)$ and $\rho_2(\cdot)$ are chosen as quadratic forms, namely, $\rho_1(e_{i|i})=e_{i|i}^\top e_{i|i}$ and $\rho_2(e_{k|k})=e_{k|k}^\top e_{k|k}$. Under this choice, the cost function \eqref{jkcost} becomes
\begin{equation}
\begin{aligned}
\CL_k(\hat{\X}_{k | k} | y_{1: k}^{m_k})&=\E [\exp (\sum_{i=1}^{k-1} \mu_{1,i}e_{i|i}^\top e_{i|i} +\mu_{2,k} e_{k|k}^\top e_{k|k})], \label{jkcostquad}
   \end{aligned}
\end{equation}

\subsection{Bayesian formulation of the MRSE}\label{subsec:genframe}
For the development of the proposed estimator, the optimization problem \eqref{argmin} is formulated within a Bayesian estimation framework. Let $y_{1:k}^{m_k} = \{y_1^{m_k}, \dots, y_k^{m_k}\}$ denote the available measurement sequence at the estimation center up to time instant $k$. Then, the posterior probability density of $\X_{0:k}$ conditioned on $y_{1: k}^{m_k}$, can be expressed as \cite{arulampalam2002tutorial} 
\begin{equation*}
    p(\X_{0: k}|y_{1: k}^{m_k})=\dfrac{ p(y_k^{m_k}|\X_{0: k}, y_{1: k-1}^{m_k}) p(\X_{0: k}|y_{1: k-1}^{m_k})}{p(y_k^{m_k} | y_{1: k-1}^{m_k})},
\end{equation*}
with 
\begin{equation*}
  p(y_k^{m_k} | y_{1: k-1}^{m_k})=\int p(y_k^{m_k} |\X_{0: k}, y_{1: k-1}^{m_k}) p(\X_{0: k} | y_{1: k-1}^{m_k}) d \X_{0: k},
\end{equation*}
According to the measurement model \eqref{asysb}, the current measurement $y_k^{m_k}$ is completely determined by the current state $\X_k$. Therefore, conditioned on $\X_k$, the measurement $y_k^{m_k}$ is independent of the past states $\X_{0: k-1}$ as well as the previous received measurements $y_{1:k-1}^{m_k}$. This conditional independence property implies that 
\begin{equation*}
p(y_k^{m_k}|\X_{0: k}, y_{1:k-1}^{m_k})=p(y_k^{m_k} | \X_k).
\end{equation*}
Assuming that the optimal estimates $\hat{\X}_{0|0}^\star, \ldots, \hat{\X}_{k-1|k-1}^\star$ are available up to time instant $k-1$, the posterior density of state $\X_k$ can be written as 
\begin{equation*}
 p(\X_k|y_{1:k}^{m_k})=\dfrac{p(y_k^{m_k}|\X_k) p(\X_k|y_{1: k-1}^{m_k})}{p(y_k^{m_k}|y_{1: k-1}^{m_k})}.   
\end{equation*}
Using the Chapman-Kolmogorov relation for $p(\X_k|y_{1:k-1}^{m_k})$, the posterior density can be written as
\begin{equation} \label{xpostpdf}
\begin{aligned}
p(\X_k|y_{1:k}^{m_k})= \dfrac{p(y_k^{m_k}|\X_k) \int p(\X_k|\X_{k-1})p(\X_{k-1}|y_{1:k-1}^{m_k}) d \X_{k-1}}{p(y_k^{m_k}|y_{1: k-1}^{m_k})}.
\end{aligned}
\end{equation}
To account for the accumulated risk associated with past estimation errors, introduce the information state as 
\begin{equation} \label{infstate}
    \Phi_k \triangleq p(\X_k | \I_k)=\exp (\sum_{i=0}^{k-1} \mu_{1,i}e_{i|i}^{\top} e_{i | i}) p(\X_k | y_{1: k}^{m_k}),
\end{equation}
where $\I_k=\{y_{1: k}^{m_k}, e_{1|1}, ...,e_{k-1|k-1}\}$ denotes the information set at time instant $k$ \cite{bar2001estimation} and the initialization is taken as $\Phi_0=p(\X_0)$. Now, substituting \eqref{xpostpdf} into \eqref{infstate} yields
\begin{equation*}
\begin{aligned}
\Phi_k
=&\dfrac{p(y_k^{m_k}|\X_k)}
       {p(y_k^{m_k}|y_{1:k-1}^{m_k})}
\int p(\X_k|\X_{k-1}) \exp(\sum_{i=0}^{k-2}\mu_{1,i}
e_{i|i}^{\top}e_{i|i}) \\
& \exp(
\mu_{1,k-1} e_{k-1|k-1}^{\top}e_{k-1|k-1}) 
p(\X_{k-1}|y_{1:k-1}^{m_k})\,d\X_{k-1},
\end{aligned}
\end{equation*}
which is equivalent to
\begin{equation} \label{rho}
\begin{aligned}
\Phi_k
=&\dfrac{p(y_k^{m_k}|\X_k)}
       {p(y_k^{m_k}|y_{1:k-1}^{m_k})}
\int p(\X_k|\X_{k-1}) \\
& \exp(\mu_{1,k-1} e_{k-1|k-1}^{\top}e_{k-1|k-1}) \Phi_{k-1}\, d\X_{k-1}.
\end{aligned}
\end{equation}
Define the predicted information state, $p(\X_k | \bar{\I}_{k-1})$, as
\begin{equation} \label{xpredicted}
\begin{aligned}
p(\X_k | \bar{\I}_{k-1})= & \int p(\X_k | \X_{k-1}) \exp(\mu_{1, k-1} e_{k-1 | k-1}^{\top} e_{k-1 | k-1}) \\
& \times \Phi_{k-1} \, d \X_{k-1},
\end{aligned}
\end{equation}
where $\bar{\I}_{k-1}=\{\I_{k-1}, e_{k-1 | k-1}\}$ denotes the prior information set.

Using \eqref{xpredicted}, the information state $\Phi_k$ can be written as
\begin{equation} \label{reducedrho}
\Phi_k \triangleq p(\X_k | \I_k)= \dfrac{p(y_k^{m_k}|\X_k)p(\X_k | \bar{\I}_{k-1})}{p(y_k^{m_k}|y_{1:k-1}^{m_k})} .
\end{equation}
Using the definition of expectation in the cost function \eqref{jkcostquad} yields
\begin{equation*}
\begin{aligned}
    \CL_k(\hat{\X}_{k|k}|y_{1:k}^{m_k}) =& \int \exp (\sum_{i=1}^{k-1} \mu_{1, i} e_{i | i}^{\top} e_{i | i}+\mu_{2, k} e_{k | k}^{\top} e_{k|k}) \\
    & \times p(\X_k | y_{1: k}^{m_k})\, d \X_k,
\end{aligned}    
\end{equation*}
and by \eqref{infstate}, it reduces to
\begin{equation} \label{reducedjk}
    \CL_k(\hat{\X}_{k|k}|y_{1: k}^{m_k})=\int \exp (\mu_{2, k} e_{k|k}^{\top} e_{k|k}) \Phi_k \, d \X_k.
\end{equation}
Therefore, the minimum RS estimate is obtained by recursively solving \eqref{argmin} together with \eqref{xpredicted}-\eqref{reducedjk}. It is important to note that the past estimation errors are embedded in the information state $\Phi_k$, whereas the cost function in \eqref{reducedjk} depends solely on the current estimation error.

\begin{remark} \label{rem3}
For $\mu_{1,k-1}=0$ and $\mu_{2,k}>0$, no risk penalty is assigned to the previous estimation errors. In this case, the information state coincides with the posterior density, namely, $p(\X_k|\I_k)=p(\X_k|y_{1: k}^{m_k})$, and the cost function $\CL_k(\hat{\X}_{k|k}|y_{1: k}^{m_k})$ reduces to a standard exponential quadratic form.
\end{remark} 

\subsection{Risk sensitive filtering for $\sigma$-periodic system} \label{subsec:rskfaugsys}
The objective of this subsection is to obtain a recursive expression for the estimate $\hat{\X}_{k|k}$ of the RRIPS \eqref{RRIPS} through the minimization of $\CL_k(\hat{\X}_{k|k}|y_{1:k}^{m_k})$. To obtain a recursive solution, $\Phi_k$ is assumed to be an unnormalized Gaussian density for sufficiently small nonnegative scalar $\mu_{1, k-1}$ \cite{boel2002robustness}. Since $\Phi_0=p(\X_0)$ is Gaussian, this property is preserved recursively and $\Phi_k$ remains Gaussian whenever $\Phi_{k-1}$ is Gaussian. 

\begin{thm} \label{theorem1}
For the RRIPS \eqref{RRIPS}, the predicted mean and error covariance can be obtained as
\begin{equation} \label{prexhat}
\begin{aligned}
\hat{\X}_{k|k-1} &= \A \hat{\X}_{k-1|k-1}, \\
\Pb_{k|k-1} &= \A(\Pb_{k-1|k-1}^{-1}-2 \mu_{1, k-1} I)^{-1} \A^{\top}+\bar{Q}_k ,
\end{aligned}
\end{equation}
where $\Pb_{k-1|k-1}$ is the posterior error covariance.
\end{thm}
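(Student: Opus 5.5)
We compute the predicted information state \eqref{xpredicted} directly, using the Gaussian structure assumed at the start of this subsection. By the induction hypothesis, $\Phi_{k-1}$ is an unnormalized Gaussian density in $\X_{k-1}$ with mean $\hat{\X}_{k-1|k-1}$ and covariance $\Pb_{k-1|k-1}$:
\[
\Phi_{k-1}\propto \exp\bigl(-\tfrac12 e_{k-1|k-1}^\top \Pb_{k-1|k-1}^{-1} e_{k-1|k-1}\bigr),
\qquad e_{k-1|k-1}=\X_{k-1}-\hat{\X}_{k-1|k-1}.
\]
Multiplying by the risk factor $\exp(\mu_{1,k-1}e_{k-1|k-1}^\top e_{k-1|k-1})$ and collecting the quadratic forms in the exponent gives
\[
\exp\bigl(-\tfrac12 e_{k-1|k-1}^\top(\Pb_{k-1|k-1}^{-1}-2\mu_{1,k-1}I)e_{k-1|k-1}\bigr).
\]
This is again an unnormalized Gaussian in $\X_{k-1}$. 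Its mean is still $\hat{\X}_{k-1|k-1}$, because the risk term is centred at the same point. Its covariance is the ``risk-inflated'' matrix $\tilde P_{k-1}:=(\Pb_{k-1|k-1}^{-1}-2\mu_{1,k-1}I)^{-1}$.

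\textbf{Main obstacle.} This step is where the argument can fail: $\tilde P_{k-1}$ is a valid covariance only if $\Pb_{k-1|k-1}^{-1}-2\mu_{1,k-1}I\succ0$. Equivalently, we need $\mu_{1,k-1}<1/(2\lambda_{\max}(\Pb_{k-1|k-1}))$. This is exactly the ``sufficiently small $\mu_{1,k-1}$'' hypothesis quoted from \cite{boel2002robustness}, and we will state it explicitly as the standing condition. Without it, the integral in \eqref{xpredicted} diverges and the Gaussian recursion breaks down.

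\textbf{Propagation.} Next we apply the transition kernel $p(\X_k|\X_{k-1})$ of \eqref{RRIPSa}. Rather than writing out the integral of two Gaussians, we argue via moments. The predicted information state is the (unnormalized) law of $\X_k=\A\X_{k-1}+\B(z)w_{k-1}$, where $\X_{k-1}$ has the inflated Gaussian density above. For this we need the effective noise $\B(z)w_{k-1}$ to be zero mean, Gaussian, with covariance $\bar Q_k$ as in Remark \ref{rem1}, and uncorrelated with $\X_{k-1}$ given the information set. Then linearity of expectation gives the mean $\A\hat{\X}_{k-1|k-1}$. Bilinearity of the covariance gives $\A\tilde P_{k-1}\A^\top+\bar Q_k$, which is \eqref{prexhat}. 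Equivalently, one can complete the square in $\X_{k-1}$ inside the Chapman–Kolmogorov integral; this standard Gaussian convolution identity yields the same result.

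\textbf{Non-causal noise.} A subtle point is the independence of $\B(z)w_{k-1}$ from $\X_{k-1}$. Since $\B(z)$ contains future noises $w_{k},\dots,w_{k-1+h}$, and by Remark \ref{rem2} $x_{2,\cdot}$ is built from such noises, exact independence is not automatic. We follow \cite{zhang2003risk} and treat the augmented model \eqref{asys} as a state space model driven by an effective white noise with covariance $\bar Q_k$; this is the modelling convention adopted in Remark \ref{rem1}. Moreover, the second block row of $\A$ is zero, so $\A\X_{k-1}$ depends only on $x_{1,k-1}$, which involves only past noises $w_0,\dots,w_{k-2}$ and $x_{1,0}$. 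Meanwhile, $\B(z)w_{k-1}$ involves $w_{k-1},\dots,w_{k-1+h}$. These are disjoint sets of noises, and $x_{1,0}$ is uncorrelated with the $w$'s by Remark \ref{rem2}. Hence the cross terms vanish, which justifies the additive form $\A\tilde P_{k-1}\A^\top+\bar Q_k$.
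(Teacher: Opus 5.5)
Your proposal is correct and follows essentially the paper's argument: you merge the risk factor into the Gaussian $\Phi_{k-1}$ to get the inflated covariance $(\Pb_{k-1|k-1}^{-1}-2\mu_{1,k-1}I)^{-1}$ under $2\mu_{1,k-1}\Pb_{k-1|k-1}<I$, then push it through the kernel $\mathcal{N}(\X_k;\A\X_{k-1},\bar{Q}_k)$; you do this via the moments of $\A X+W$, while the paper uses the Gaussian product identity and marginalises out $\X_{k-1}$, which is the same computation. One caution: your disjoint-noise remark only shows that $\A\X_{k-1}$ and $\B(z)w_{k-1}$ are unconditionally uncorrelated, but they are correlated given the information set, because the past measurements involve $x_{2,k-1}$, which depends on $w_{k-1},\dots,w_{k+h-2}$, noises that also enter $\B(z)w_{k-1}$; so that step rests entirely on the white-noise modelling convention, exactly as it does in the paper, which simply posits the transition kernel.
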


\begin{proof}
To determine the prediction expressions, consider the predicted information state given in \eqref{xpredicted}. Now, suppose that $\Phi_{k-1}$ admits the Gaussian distribution, then we can write 
\begin{equation*}
    \begin{aligned}
\Phi_{k-1} = \dfrac{\exp(-\frac{1}{2}e_{k-1|k-1}^{\top}\Pb_{k-1|k-1}^{-1}e_{k-1|k-1})}{p(y_{k-1}^{m_k}|y_{1:k-2}^{m_k})\sqrt{(2 \pi)^n|\Pb_{k-1|k-1}|}} 
\end{aligned}
\end{equation*}
where $e_{k-1|k-1} = \X_{k-1}-\hat{\X}_{k-1|k-1}$ is the estimation error. Substituting the above expression into \eqref{xpredicted} and combining the quadratic terms appearing in the exponent yields
\begin{equation} \label{xpredgaussain1}
\begin{aligned}
\Phi_{k-1}
=&\dfrac{((2\pi)^n|\Pb_{k-1|k-1}|)^{-1/2}}
{p(y_{k-1}^{m_k}|y_{1:k-2}^{m_k})}\int p(\X_k|\X_{k-1})
\\
& \times \exp(
-\frac{1}{2}e_{k-1|k-1}^{\top}\Omega_{k-1}
e_{k-1|k-1})
\,d\X_{k-1}.
\end{aligned}
\end{equation}
where $\Omega_{k-1} = \Pb_{k-1|k-1}^{-1}-2\mu_{1,k-1}I$ and the risk parameter $\mu_{1, k-1}$ is selected such that $2 \mu_{1, k-1} \Pb_{k-1|k-1}<I$. Under this condition, the exponential term in \eqref{xpredgaussain1} can be interpreted as a Gaussian density up to a normalization constant. Hence,
\begin{equation} \label{xpredgaussain2}
\begin{aligned}
p(\X_k|\bar{\I}_{k-1}) &= \dfrac{\int p(\X_k|\X_{k-1}) \mathcal{N}(\X_{k-1} ; \hat{\X}_{k-1 | k-1},\Omega_{k-1}^{-1}) d \X_{k-1}}{p(y_{k-1}^{m_k}|y_{1:k-2}^{m_k})}.
\end{aligned}
\end{equation}
Using the state transition model associated with \eqref{asys}, we have $p(\X_k|\X_{k-1})=\mathcal{N}(\X_k ; \A \X_{k-1}, \bar{Q}_k)$ \cite{ho1964bayesian} and substituting it into \eqref{xpredgaussain2}, the Gaussian product identity \cite{challa2011fundamentals} yields 
\begin{equation} \label{xpredgaussain3}
\begin{aligned}
p(\X_k|\tilde{\I}_{k-1}) &=\dfrac{\int \mathcal{N}(\X_k ; \Pi_1, \Gamma_1)\mathcal{N}(\X_{k-1} ; \Pi_2, \Gamma_2) d \X_{k-1}}{p(y_{k-1}^{m_k}|y_{1:k-2}^{m_k})},
\end{aligned}
\end{equation}
where $\Pi_1 = \A \hat{\X}_{k-1|k-1}$, $\Gamma_1 = \A\Omega_{k-1}^{-1} \A^{\top}+ \bar{Q}_k$, $\Lambda =\Omega_{k-1}^{-1} \A^{\top} \Sb_1^{-1}$, $\Pi_2 =\hat{\X}_{k-1|k-1}+ \Lambda(\X_k-\A \hat{\X}_{k-1|k-1})$, $\Gamma_2 = (I-\Lambda\A)\Omega_{k-1}^{-1}$.

Since the Gaussian density involving $\X_k$ is independent of $\X_{k-1}$, it can be taken outside the integral in \eqref{xpredgaussain3}. Using $\int \mathcal{N}(\X_{k-1} ; \cdot)\,d \X_{k-1}=1$, the predicted information state in \eqref{xpredgaussain3} can be obtained, up to normalization, as
\begin{equation} \label{predxgauss}
p(\X_k|\tilde{\I}_{k-1}) \sim \mathcal{N}(\X_k ; \A \hat{\X}_{k-1|k-1}, \A \Omega_{k-1}^{-1} \A^{\top}+\bar{Q}_k).
\end{equation}
The prediction formulas in \eqref{prexhat} follows directly from the corresponding mean and covariance of \eqref{predxgauss}.
\end{proof}

To derive the posterior information state $p(\X_k|\I_k)$, it is necessary to obtain expressions for the conditional mean of the measurement, the innovation covariance, and the cross covariance. The corresponding results are established in the following lemmas.

\begin{lem} \label{lemma2}
For the measurement $y_k^{m_k}$, the conditional mean and covariance conditioned on $\bar{\I}_{k-1}$ are obtained as
\begin{equation} \label{inncov}
\begin{aligned}
\E[y_k^{m_k}|\bar{\I}_{k-1}] &= \Hb_{m_k}\hat{\X}_{k|k-1}, \\
\Pb_{yy}^{m_k} &= \Hb_{m_k} \Pb_{k|k-1} \Hb_{m_k}^\top + R_k^{m_k}    
\end{aligned}
\end{equation}
where $R_k^{m_k} = \E[v_k^{m_k}{v_k^{m_k}}^\top]$ is the partitioned measurement noise covariance.
\end{lem}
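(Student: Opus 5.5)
The plan is to compute the first two conditional moments of $y_k^{m_k}$ directly from the measurement equation \eqref{RRIPSb}. We use the predicted information state obtained in Theorem~\ref{theorem1}: after normalization, $p(\X_k|\bar{\I}_{k-1})$ is Gaussian, $\mathcal{N}(\X_k;\hat{\X}_{k|k-1},\Pb_{k|k-1})$, with $\hat{\X}_{k|k-1}$ and $\Pb_{k|k-1}$ given by \eqref{prexhat}. Conditioned on $\bar{\I}_{k-1}$, $y_k^{m_k}=\Hb_{m_k}\X_k+v_k^{m_k}$ is an affine function of the Gaussian vector $\X_k$ plus the noise $v_k^{m_k}$. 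So it suffices to compute its mean and covariance, and the Gaussian form will then follow automatically.

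\textbf{Mean.} Take the conditional expectation of \eqref{RRIPSb}. This gives $\E[y_k^{m_k}|\bar{\I}_{k-1}]=\Hb_{m_k}\E[\X_k|\bar{\I}_{k-1}]+\E[v_k^{m_k}|\bar{\I}_{k-1}]$. The first term equals $\Hb_{m_k}\hat{\X}_{k|k-1}$ by Theorem~\ref{theorem1}. The second term vanishes: $v_k^{m_k}$ is a selection of rows of $v_k$, which is zero mean by Assumption~2, and $v_k$ is independent of $\bar{\I}_{k-1}$. That independence holds because $\bar{\I}_{k-1}$ is generated by $y_{1:k-1}^{m_k}$ and past errors, which involve only $v_{1:k-1}$, $x_0$ and the $w$'s. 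By Assumptions~2 and~3 these are all independent of $v_k$, with the $v$'s white.

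\textbf{Covariance.} Write the measurement prediction error as $\tilde y_k=y_k^{m_k}-\Hb_{m_k}\hat{\X}_{k|k-1}=\Hb_{m_k}(\X_k-\hat{\X}_{k|k-1})+v_k^{m_k}$. Expanding $\E[\tilde y_k\tilde y_k^\top|\bar{\I}_{k-1}]$ gives four terms. The two cross terms are $\Hb_{m_k}\E[(\X_k-\hat{\X}_{k|k-1}){v_k^{m_k}}^\top]$ and its transpose. The remaining terms are $\Hb_{m_k}\Pb_{k|k-1}\Hb_{m_k}^\top$ and $\E[v_k^{m_k}{v_k^{m_k}}^\top]=R_k^{m_k}$. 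The last is simply the principal submatrix of $R_k$ selected by the round-robin partition \eqref{partition}; when extra rows are appended for $m_k=\sigma$, it is the corresponding row/column selection.

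\textbf{Main obstacle.} The main step is to justify that the cross terms vanish despite the non-causal structure. Here $\X_k$ contains $x_{2,k}=-\sum_{i=0}^{h-1}N^iB_2w_{k+i}$, which depends on future process noise, and $x_0$ is correlated with $w_0,\dots,w_{h-1}$ (Remark~\ref{rem2}). The remedy is to observe that $\X_k-\hat{\X}_{k|k-1}$ is a linear combination of $x_{1,0}$, the process noises $w_0,\dots,w_{k+h-1}$, and the past measurement noises $v_1,\dots,v_{k-1}$ entering through the estimates. Assumption~2 gives $\E[w_iv_k^\top]=0$ for all $i$. Whiteness and Assumption~3 ($x_0$ independent of $v_k$) handle the remaining pieces. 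Hence the cross terms are zero, and the covariance reduces to \eqref{inncov}.
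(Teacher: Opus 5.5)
Your proposal is correct and follows essentially the same route as the paper. You compute the mean directly from the measurement equation \eqref{RRIPSb}, expand the innovation covariance, and show that the cross terms between $e_{k|k-1}$ and $v_k^{m_k}$ vanish. Where you go beyond the paper is in justification: you identify $\E[\X_k|\bar{\I}_{k-1}]$ and $\Pb_{k|k-1}$ through the normalized predicted information state of Theorem~\ref{theorem1}, whereas the paper simply asserts that conditioning on $\bar{\I}_{k-1}$ reduces to conditioning on $y_{1:k-1}^{m_k}$. You also spell out why the cross terms vanish despite $x_{2,k}$ depending on future process noise, a point the paper merely asserts.
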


\begin{proof}  
Since the measurement $y_k^{m_k}$ is independent of the past estimation errors $e_{1|1}, \dots, e_{k-1|k-1}$, the conditional expectation with respect to $\bar{\I}_{k-1}$ is equivalent to conditioning on $y_{1:k-1}^{m_k}$. Consequently, using the measurement model \eqref{asysb}, we obtain
\begin{equation*}
\begin{aligned}
\E[y_k^{m_k}|\tilde{\I}_{k-1}]
&= \E[\Hb_{m_k} \X_k + v_k^{m_k}|y_{1:k-1}^{m_k}] = \Hb_{m_k} \hat{\X}_{k|k-1}.
\end{aligned}
\end{equation*}
Next, the innovation covariance matrix is defined as 
\begin{equation*}
\Pb_{yy}^{m_k} = \E[(\bar{y}_k^{m_k})(\bar{y}_k^{m_k})^\top],
\end{equation*}
where $\bar{y}_k^{m_k} = y_k^{m_k} - \E[y_k^{m_k}|\bar{\I}_{k-1}]$ is the innovation error. By substituting $y_k^{m_k}$ and $\E[y_k^{m_k}|\bar{\I}_{k-1}]$ derived above, it follows that
\begin{equation*}
\begin{aligned}
\Pb_{yy}^{m_k} &= \E[(\Hb_{m_k} \X_k + v_k^{m_k} - \Hb_{m_k} \hat{\X}_{k|k-1}) \\
& \quad \times (\Hb_{m_k} \X_k + v_k^{m_k} - \Hb_{m_k} \hat{\X}_{k|k-1})^\top]\\
&= \E[(\Hb_{m_k} e_{k|k-1} + v_k^{m_k})(\Hb_{m_k} e_{k|k-1} + v_k^{m_k})^\top]
\end{aligned}
\end{equation*}
where $e_{k|k-1} = \X_k - \hat{\X}_{k|k-1}$ represents the predicted error. Using the fact that $e_{k|k-1}$ is uncorrelated with $v_k^{m_k}$, together with the prescribed noise statistics, we obtain
\begin{equation} \label{sigmayy}
\Pb_{yy}^{m_k} = \Hb_{m_k} \Pb_{k|k-1} \Hb_{m_k}^\top + R_k^{m_k}.
\end{equation}
\end{proof}

\begin{lem} \label{lemma3}
The cross covariance matrix is obtained as $\Pb_{\X y}^{m_k} = \Pb_{k|k-1}\Hb_{m_k}^\top.$
\end{lem}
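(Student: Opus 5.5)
The plan follows the pattern of the proof of Lemma \ref{lemma2}. We write the cross covariance between the predicted state and the measurement, conditioned on the prior information set $\bar{\I}_{k-1}$, as
\begin{equation*}
\Pb_{\X y}^{m_k} = \E\big[(\X_k - \hat{\X}_{k|k-1})(y_k^{m_k} - \E[y_k^{m_k}|\bar{\I}_{k-1}])^\top\big].
\end{equation*}
By Theorem \ref{theorem1}, the predicted information state $p(\X_k|\bar{\I}_{k-1})$ is, up to normalization, Gaussian with mean $\hat{\X}_{k|k-1}$ and covariance $\Pb_{k|k-1}$. The first factor inside the expectation is therefore the predicted error $e_{k|k-1}$, and expectations taken against $\bar{\I}_{k-1}$ use $\Pb_{k|k-1}$ as the second moment of $e_{k|k-1}$.

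Next we substitute the conditional mean $\E[y_k^{m_k}|\bar{\I}_{k-1}] = \Hb_{m_k}\hat{\X}_{k|k-1}$ from Lemma \ref{lemma2}, together with the measurement model \eqref{RRIPSb}. This writes the innovation as $\bar{y}_k^{m_k} = \Hb_{m_k} e_{k|k-1} + v_k^{m_k}$. Expanding the product then gives
\begin{equation*}
\Pb_{\X y}^{m_k} = \E[e_{k|k-1}e_{k|k-1}^\top]\Hb_{m_k}^\top + \E[e_{k|k-1}(v_k^{m_k})^\top].
\end{equation*}

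The main point to justify is that the second term vanishes. The predicted error $e_{k|k-1}$ is built from $\X_0$, the process noises $w_j$ (including the finitely many future ones entering through $\B(z)$, as in Remark \ref{rem2}), and the past measurement noises $v_1,\dots,v_{k-1}$ via the estimates $\hat{\X}_{i|i}$. By Assumption $2$, $w$ is uncorrelated with $v$ at all times. By Assumption $3$, $\X_0$ is independent of $v_k$. The measurement noise is zero mean and, as implicitly used in Lemma \ref{lemma2}, white. Since $v_k^{m_k}$ is a subvector of $v_k$, it is uncorrelated with every constituent of $e_{k|k-1}$. Hence $\E[e_{k|k-1}(v_k^{m_k})^\top]=0$, and identifying $\E[e_{k|k-1}e_{k|k-1}^\top]=\Pb_{k|k-1}$ gives $\Pb_{\X y}^{m_k}=\Pb_{k|k-1}\Hb_{m_k}^\top$.

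I expect the only delicate point to be the non-causal coupling introduced by $\B(z)$. It does not affect the conclusion, because the non-causality only involves $w$, never $v$.
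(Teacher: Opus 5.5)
Your proposal is correct and follows the paper's proof. The paper likewise defines $\Pb_{\X y}^{m_k}$ through the innovation, substitutes $\bar{y}_k^{m_k} = \Hb_{m_k} e_{k|k-1} + v_k^{m_k}$, and drops the noise cross term to obtain $\Pb_{k|k-1}\Hb_{m_k}^\top$. Your explicit argument that $\E[e_{k|k-1}(v_k^{m_k})^\top]=0$ is a worthwhile addition: the paper only asserts it, and it needs whiteness of $v_k$, which Assumption~2 does not state.
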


\begin{proof}  
From the definition of $\Pb_{\X y}^{m_k} = \E[(\X_k - \hat{\X}_{k|k-1})(\bar{y}_k^{m_k})^\top]$ and using the derivation steps analogous to those in Lemma \ref{lemma2}, we obtain
\begin{equation}
\begin{aligned}
\Pb_{\X y}^{m_k} &= \E[e_{k|k-1}
(\Hb_{m_k} \X_k + v_k^{m_k} - \Hb_{m_k} \hat{\X}_{k|k-1})^\top] \\
 \Pb_{\X y}^{m_k} &= \E[e_{k|k-1}
(\Hb_{m_k} e_{k|k-1} + v_k^{m_k})^\top] \\
&=\Pb_{k|k-1}\Hb_{m_k}^\top.
\end{aligned}
\end{equation}
\end{proof}

\begin{thm} \label{theorem4}
For the RRIPS \eqref{RRIPS}, the posterior estimate and error covariance are obtained as
\begin{equation} \label{postxhat}
\begin{aligned}
\hat{\X}_{k|k} &= \hat{\X}_{k|k-1} + \Pb_{\X y}^{m_k}(\Pb_{yy}^{m_k})^{-1} (y_k^{m_k} - \Hb_{m_k} \hat{\X}_{k|k-1}), \\
\Pb_{k|k} &= \Pb_{k|k-1} - \Pb_{\X y}^{m_k} (\Pb_{yy}^{m_k})^{-1} (\Pb_{\X y}^{m_k})^\top.
\end{aligned}
\end{equation}
\end{thm}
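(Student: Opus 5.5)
The plan is to work from the Bayesian update \eqref{reducedrho}, $\Phi_k \propto p(y_k^{m_k}|\X_k)\,p(\X_k|\bar{\I}_{k-1})$. Theorem \ref{theorem1} shows that the predicted information state is, up to normalization, Gaussian in $\X_k$ with mean $\hat{\X}_{k|k-1}$ and covariance $\Pb_{k|k-1}$. By Assumption 2 and \eqref{RRIPSb}, the likelihood is $p(y_k^{m_k}|\X_k)=\mathcal{N}(y_k^{m_k};\Hb_{m_k}\X_k,R_k^{m_k})$. The strategy has two parts. First, identify $\Phi_k$ as an unnormalized Gaussian $\mathcal{N}(\X_k;\hat{\X}_{k|k},\Pb_{k|k})$ and compute its mean and covariance. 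Second, show that the minimizer of \eqref{reducedjk} is exactly this mean.

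For the first part, I form the joint Gaussian of $(\X_k, y_k^{m_k})$ conditioned on $\bar{\I}_{k-1}$. Lemma \ref{lemma2} supplies its mean $(\hat{\X}_{k|k-1},\Hb_{m_k}\hat{\X}_{k|k-1})$ and the block $\Pb_{yy}^{m_k}$, and Lemma \ref{lemma3} supplies the off-diagonal block $\Pb_{\X y}^{m_k}$. The joint covariance is therefore
\begin{equation*}
\begin{bmatrix} \Pb_{k|k-1} & \Pb_{\X y}^{m_k} \\ (\Pb_{\X y}^{m_k})^\top & \Pb_{yy}^{m_k}\end{bmatrix}.
\end{equation*}
Since $\Phi_k$ is this joint density divided by the marginal of $y_k^{m_k}$, the standard Gaussian conditioning formula (Schur complement) gives the conditional mean and covariance in \eqref{postxhat} directly. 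As a cross-check, I can expand the product of the two Gaussian exponents in $\X_k$ and complete the square. This yields the information form $\Pb_{k|k}^{-1}=\Pb_{k|k-1}^{-1}+\Hb_{m_k}^\top (R_k^{m_k})^{-1}\Hb_{m_k}$, and the matrix inversion lemma converts it to the stated covariance form. The constant $p(y_k^{m_k}|y_{1:k-1}^{m_k})$ only affects normalization.

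For the second part, substitute the Gaussian $\Phi_k$ into \eqref{reducedjk}. The integrand is then proportional to
\begin{equation*}
\exp\!\big(\mu_{2,k}(\X_k-\hat{\X})^\top(\X_k-\hat{\X}) - \tfrac12(\X_k-\hat{\X}_{k|k})^\top\Pb_{k|k}^{-1}(\X_k-\hat{\X}_{k|k})\big),
\end{equation*}
where $\hat{\X}$ is the candidate estimate. I assume $2\mu_{2,k}\Pb_{k|k}<I$, which is the analogue of the condition used in Theorem \ref{theorem1}. Under this assumption I complete the square in $\X_k$ and integrate. The result is a constant times $\exp(c\,(\hat{\X}-\hat{\X}_{k|k})^\top M(\hat{\X}-\hat{\X}_{k|k}))$ with $c>0$ and $M\succ 0$; the key fact to verify is that the residual quadratic in $(\hat{\X}-\hat{\X}_{k|k})$ has positive-definite matrix $M=2\mu_{2,k}I+4\mu_{2,k}^2(\Pb_{k|k}^{-1}-2\mu_{2,k}I)^{-1}$, since this is what makes the expression strictly increasing in $\|\hat{\X}-\hat{\X}_{k|k}\|$. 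A simpler route uses symmetry: the Gaussian is symmetric about $\hat{\X}_{k|k}$ and the exponential-quadratic cost is convex in $\hat{\X}$, so the unique minimizer is $\hat{\X}_{k|k}$. Either way, $\hat{\X}_{k|k}^\star=\hat{\X}_{k|k}$, and the spread of $\Phi_k$ is $\Pb_{k|k}$.

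I expect the main obstacle to be the second part. One difficulty is ensuring the integral in \eqref{reducedjk} is finite, which requires $2\mu_{2,k}\Pb_{k|k}<I$. The other is the careful completion of the square, done so that the minimizer is clearly the conditional mean and the covariance is unaffected by $\mu_{2,k}$. This is where the risk-sensitive setting departs from the plain Kalman argument. In contrast, the first part is routine Gaussian algebra built on Lemmas \ref{lemma2} and \ref{lemma3}.
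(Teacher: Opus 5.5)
Your first part is the paper's argument. The paper also writes $\Phi_k$ as the joint Gaussian of $(y_k^{m_k},\X_k)$ given $\bar{\I}_{k-1}$, assembled from Theorem~\ref{theorem1} and Lemmas~\ref{lemma2}--\ref{lemma3} and divided by $p(y_k^{m_k}|y_{1:k-1}^{m_k})$, then completes the square (Schur complement) to read off \eqref{postxhat}. Your second part, which shows that the minimizer of \eqref{reducedjk} is the mean of $\Phi_k$ (your $M=2\mu_{2,k}I+4\mu_{2,k}^2(\Pb_{k|k}^{-1}-2\mu_{2,k}I)^{-1}$ is correct), is a step the paper takes for granted; it is sound, but you should state $2\mu_{2,k}\Pb_{k|k}<I$ explicitly as a well-posedness assumption, since the paper never imposes it.
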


\begin{proof}  
The posterior information density can be obtained by incorporating the newly received measurement $y_k^{m_k}$ into the predicted information state $p(\X_k|\bar{\I}_{k-1})$. In view of \eqref{reducedrho}, we have
\begin{equation*} 
 p(\X_k | \I_k)= \dfrac{p(\X_k, y_k^{m_k}| \bar{\I}_{k-1})}{p(y_k^{m_k}|y_{1:k-1}^{m_k})}.
\end{equation*}
Assuming that $p(y_k^{m_k} | \bar{\I}_{k-1})$ is Gaussian, \emph{i.e.,} 
$$p(y_k^{m_k} |\bar{\I}_{k-1}) \sim \mathcal{N}(y_k^{m_k};\, \E[y_k^{m_k}|\bar{\I}_{k-1}], \Pb_{yy}^{m_k}),$$ 
where $\E[y_k^{m_k}|\bar{\I}_{k-1}]$ is obtained from Lemma \ref{lemma2}. Furthermore, Eqs. \eqref{prexhat} and \eqref{predxgauss} imply that $p(\X_k |\bar{\I}_{k-1}) \sim \mathcal{N}(\X_k;\, \hat{\X}_{k|k-1}, \Pb_{k|k-1})$. Since both the predicted state and the measurement are jointly Gaussian, their joint Gaussian density can be written as
\begin{equation} \label{postxgauss}
p(\X_k|\I_k)
= \dfrac{\mathcal{N}\left(
\begin{bmatrix}
y_k^{m_k} \\
\X_k
\end{bmatrix};
\begin{bmatrix}
\E[y_k^{m_k} |\bar{\I}_{k-1}] \\
\hat{\X}_{k|k-1}
\end{bmatrix},
\begin{bmatrix}
\Pb_{yy}^{m_k} & \Pb_{y \X}^{m_k} \\
\Pb_{\X y}^{m_k} & \Pb_{\X \X}
\end{bmatrix}
\right)}{p(y_k^{m_k}|y_{1: k-1}^{m_k})},
\end{equation}
where $\Pb_{\X \X} = \Pb_{k|k-1}$. By rearranging the quadratic terms appearing in \eqref{postxgauss} and completing the square (for more details, we refer to \cite{tiwari2022risk}), the posterior density, $p(\X_k|\I_k)$,  can be expressed in the form
\begin{equation}\label{postxexp}
    \begin{aligned}
p(\X_k |\I_k) &= \dfrac{\exp(
-\frac{1}{2}\hat{e}_{k|k}^\top\Theta^{-1}\hat{e}_{k|k})}{p(y_k^{m_k}|y_{1: k-1}^{m_k})}.  
    \end{aligned}
\end{equation}
where $\hat{e}_{k|k} = e_{k|k-1} - \Pb_{\X y}^{m_k} (\Pb_{yy}^{m_k})^{-1}\bar{y}_k^{m_k}$ and $\Theta =  \Pb_{\X \X} - \Pb_{\X y}^{m_k}(\Pb_{yy}^{m_k})^{-1} \Pb_{y\X}^{m_k}$.

The condition $2\mu_{1,k-1}\Pb_{k-1|k-1}<I$, together with the nonsingularity of the covariance matrices, guarantees that \eqref{postxexp} defines a Gaussian density. Therefore, the posterior mean and covariance are obtained from the parameters of $p(\X_k|\I_k)$. Finally, the update equations in \eqref{postxhat} are obtained by combining \eqref{postxexp} with the results established in Lemma \ref{lemma2}.
\end{proof}

\begin{remark} \label{rem4}
It is worth noting that when $\mu_{1,k-1} = 0$, the contribution of the past estimation errors vanishes from the criterion \eqref{jkcostquad}, and the resulting cost function reduces to the standard exponential quadratic form. In this case, the proposed RSKF degenerates to the standard KF. Moreover, if the update estimate is written as $\hat{\X}_{k|k} = L_k \hat{\X}_{k|k-1} + K_k y_k^{m_k}$, where $L_k = I - K_k \Hb,~K_k = \Pb_{\X y}^{m_k} (\Pb_{yy}^{m_k})^{-1}$, then, from the Proposition $1$ of \cite{ray1993state}, the proposed estimator remains unbiased, \emph{i.e.}, $\E[\X_k - \hat{\X}_{k|k}] =  0$, provided that $\mu_{2,k} > 0$ and $\mu_{1,k-1}$ is selected as a sufficiently small non-negative scalar satisfying $2\mu_{1,k-1}\Pb_{k-1|k-1} < I$, $\forall \; k$. 
\end{remark} 

\begin{remark}
Once the estimate $\hat{\X}_{k|k}$ of the transformed state vector is obtained from the proposed filter for the RRIPS \eqref{RRIPS}, the estimate of the original state $x_k$ can be obtained as 
	\begin{equation} \label{os}
	\hat{x}_{k|k} = V \hat{\X}_{k|k},
	\end{equation} 
where $V$ is the WCF transformation matrix. Therefore, the state estimation problem for the singular system \eqref{sls} can be addressed in the transformed coordinates by estimating $\X_k$, after which the corresponding estimate in the original state is recovered through the transformation matrix $V$.
\end{remark}

\subsection{Selection of risk sensitive parameter} \label{subsec:rspara}
Most existing RS filtering approaches, such as \cite{boel2002robustness}, \cite{dey2002risk}, employ fixed values of the RS parameters throughout the estimation process. However, such a restriction is not essential for the proposed framework. Specifically, the RS parameter $\mu_{2,k}$ affects only the scaling of the cost function and does not alter the resulting estimate provided that $\mu_{2,k}>0$. In contrast, the RS parameter $\mu_{1,k-1}$ determines the degree to which past estimation errors influence the current estimation update. To ensure the well-posedness of the filter recursion, the predicted covariance matrix must remain positive definite. This requires $\Pb_{k-1|k-1}^{-1} - 2\mu_{1,k-1} I > 0$, which imposes a covariance dependent constraint on the admissible values of $\mu_{1,k-1}$. Motivated by this observation, an adaptive selection mechanism is adopted in which $\mu_{1,k-1}$ is updated online using the current covariance information. As a result, the level of risk sensitivity is adjusted dynamically according to the prevailing estimation uncertainty.

The proposed adaptation offers two key benefits. On the one hand, it ensures that the predicted covariance matrix remains positive definite, thereby preserving the feasibility of the recursive filtering procedure. On the other hand, it allows the degree of risk sensitivity to vary with the current level of estimation uncertainty. As a result, the filter exhibits enhanced robustness and achieve improved estimation accuracy relative to the standard RSKF methods which employs the fixed RS parameters.

\section{Observability and Stability} \label{sec:stability}
Before discussing the stability of the proposed RSKF, the next theorem provides a characterization for observability of \eqref{RRIPS}. The proof proceeds along the same lines as that of Proposition $4.4$ in \cite{bittanti2009periodic} and hence it is omitted. 

\begin{thm} 
The $\sigma$-periodic system \eqref{RRIPS} is observable at time instant $k$, if and only if, for each eigenpair $(\varepsilon, \omega)$ of $\A^{\sigma}$, the conditions
$\A^{\sigma} \omega = \varepsilon \omega \enspace \text{and} \enspace \Hb_{m_s} \A^{s-k} \omega =0$ imply that $\omega = 0 \enspace \forall s \in \{k, k+1, \cdots k+\sigma-1\}$.
	\end{thm}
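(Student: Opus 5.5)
We treat this as the PBH-type eigenvector test for observability of a linear $\sigma$-periodic system, applied to the RRIPS \eqref{RRIPS}. In \eqref{RRIPS} the state matrix $\A$ is constant and only the output matrix $\Hb_{m_k}$ varies with period $\sigma$. The plan is to reduce observability at time $k$ to a rank condition on a time-invariant lifted pair and then apply the classical Popov--Belevitch--Hautus test.

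First, we fix the definition. System \eqref{RRIPS} is observable at $k$ if the noise-free output over one period, $y_s^{m_s}=\Hb_{m_s}\A^{s-k}\X_k$ for $s=k,\dots,k+\sigma-1$, determines $\X_k$ uniquely. We introduce the lifted output matrix
\begin{equation*}
\mathcal{O}_k=\begin{bmatrix}\Hb_{m_k}^\top & (\Hb_{m_{k+1}}\A)^\top & \cdots & (\Hb_{m_{k+\sigma-1}}\A^{\sigma-1})^\top\end{bmatrix}^\top .
\end{equation*}
Since $m_{s+\sigma}=m_s$, the output over window $[k+j\sigma,k+(j+1)\sigma)$ equals $\mathcal{O}_k(\A^\sigma)^j\X_k$. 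Hence, following \cite{bittanti2009periodic}, observability at $k$ (over arbitrarily long horizons) is equivalent to observability of the time-invariant pair $(\A^{\sigma},\mathcal{O}_k)$. That is,
\begin{equation*}
\rank\begin{bmatrix}\mathcal{O}_k^\top & (\mathcal{O}_k\A^\sigma)^\top & \cdots & (\mathcal{O}_k\A^{\sigma(n-1)})^\top\end{bmatrix}^\top=n .
\end{equation*}
This step uses only periodicity of $m_k$ and Cayley--Hamilton.

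Next, we apply the PBH eigenvector test to $(\A^\sigma,\mathcal{O}_k)$. The pair is unobservable if and only if there is $\omega\neq0$ and $\varepsilon\in\mathbb{C}$ with $\A^\sigma\omega=\varepsilon\omega$ and $\mathcal{O}_k\omega=0$. Writing out $\mathcal{O}_k\omega=0$ block by block gives exactly $\Hb_{m_s}\A^{s-k}\omega=0$ for all $s\in\{k,\dots,k+\sigma-1\}$. Taking the contrapositive yields the stated characterization. For the ``only if'' direction, such an $\omega$ spans an $\A^\sigma$-invariant subspace in $\ker\mathcal{O}_k$, so $\mathcal{O}_k(\A^{\sigma})^j\omega=\varepsilon^j\mathcal{O}_k\omega=0$ for all $j$, and $\omega$ (or its real and imaginary parts) is an unobservable initial state. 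For the ``if'' direction, the unobservable subspace $\bigcap_j\ker \mathcal{O}_k\A^{\sigma j}$ is $\A^\sigma$-invariant. If it were nonzero, it would contain an eigenvector of $\A^\sigma$, and that eigenvector would violate the hypothesis.

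The main obstacle is justifying the reduction in the second paragraph, namely that observability at $k$ for the periodic system means observability of the lifted invariant pair rather than reconstruction from only $\sigma$ samples. The structure of $\A$ matters here. Since $\A^\sigma=\operatorname{diag}(A_1^\sigma,0)$, the eigenvalue $\varepsilon=0$ corresponds to eigenvectors that include the $x_2$ block. For these, observability in a finite-horizon sense and reconstructability could differ. To handle this we would adopt the definition of \cite{bittanti2009periodic} explicitly (observability over $[k,k+n\sigma)$) and note that it is equivalent to injectivity of the lifted map. The PBH step is then routine. Eigenvectors may be complex, so we would work over $\mathbb{C}$ and pass back to real states by taking real and imaginary parts.
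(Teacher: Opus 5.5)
Your argument is correct and follows essentially the route the paper points to (it omits the proof and cites Proposition 4.4 of \cite{bittanti2009periodic}). Periodicity of $m_k$ reduces observability at $k$ to observability of the monodromy pair $(\A^{\sigma},\mathcal{O}_k)$, and the PBH eigenvector test then gives exactly the stated condition. One caveat: state the definition via the free output over $[k,k+n\sigma)$ (equivalently $[k,\infty)$) from the start, because under the one-period definition in your first paragraph the ``if'' direction fails whenever $\ker\mathcal{O}_k$ is nonzero but contains no eigenvector of $\A^{\sigma}$.
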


The filter developed in this paper is based on the nominal augmented model \eqref{RRIPS}. To analyze the effect of model uncertainty on the estimation performance, we now consider the uncertain singular system \eqref{sls}. Applying the concept of WCF as described in Section \ref{subsec:systrans}, the uncertain system can be transformed as the following augmented state space representation
\begin{subequations} \label{mcsys}
\begin{align}
	\X_{k+1} &= (\A + \Delta \A)\X_{k} + \B(z) w_{k}, \label{mcsysa}\\
	y_k^{m_k} &= \Hb_{m_k} \X_{k} + v_{k}^{m_k}. \label{mcsysb}
\end{align}
\end{subequations}
The matrix $\Delta\A$ denotes the uncertainty in the transformed model arising from the uncertainty $\Delta A$ in \eqref{sls}. It is assumed that $\A$, $\A+\Delta\A$, and $\Hb_{m_k}$ are bounded, and that $\A+\Delta\A$ remains invertible whenever $\A$ is invertible. Due to the nonsingularity of the WCF transformation matrices, bounded uncertainties in the original singular system induce bounded uncertainties in the augmented model \eqref{mcsys}. Consequently, the robustness analysis can be performed in the transformed state space framework without loss of generality.

Although the RSKF is an optimal filter \cite{boel2002robustness}, optimality alone is insufficient to ensure filter stability \cite{jazwinski2007stochastic}. Assume that the state trajectory of the nominal RRIPS \eqref{RRIPS} remains bounded. For the uncertain RRIPS system \eqref{mcsys}, define the transition matrices as 
\begin{equation} \label{transmat}
\begin{aligned}
\bar{\AC}_{i,k} = (\A_{i,k} + \Delta \A_{i,k}) &= ( \A + \Delta \A)^{i-k}, \\
\bar{\AC}_{k,i} =  (\A_{k,i} + \Delta \A_{k,i}) &= ( \A + \Delta \A)^{k-i}, \quad 0 \le i < k,
\end{aligned}
\end{equation}
where $\A_{k,k} = (\A_{k,k} + \Delta \A_{k,k}) = I.$ The matrices $\bar{\AC}_{i,k}$ and $\bar{\AC}_{k,i}$ describe the backward and forward propagation of the state, respectively. In addition, $\Delta \A_{i,k} = (\A + \Delta \A)^{i-k} - \A^{i-k}$, $\Delta \A_{k,i} = (\A + \Delta \A)^{k-i} - \A^{k-i}$, and whenever $\A$ is nonsingular $\A_{k-1,k} = \A_{k,k-1}^{-1} = \A^{-1}$. 

Using the above transition matrices, the observability and controllability Gramians over the interval $[k-l,k]$, where $k \geq l$ and $l \in \mathbb{Z}_>0$, are defined as
\begin{equation} 
\begin{aligned}
\OM_{k,k-l} &= \sum_{i=k-l}^{k}\bar{\AC}_{i,k}^\top \Hb_{m_k}^\top (R_i^{m_k})^{-1}\Hb_{m_k} \bar{\AC}_{i,k},\\
\C_{k,k-l} &= \sum\limits_ {i=k-l}^{k-1}\bar{\AC}_{k,i+1}^\top \bar{Q}_{i} \bar{\AC}_{k,i+1} \label{obsmatrix}.
\end{aligned}
\end{equation}
 The uncertain RRIPS \eqref{mcsys} is said to be uniformly completely observable and uniformly completely controllable, if there exists positive constants $k_1, k_2, k_3$, and $k_4$ such that 
 \begin{equation} \label{k1k2}
0 < k_1 I \le \OM_{k,k-l} \le k_2 I, \quad
0 < k_3 I \le \C_{k,k-l} \le k_4 I,
\end{equation}
for all admissible values of $k$. Equivalently, both $\OM_{k,k-l}$ and $\C_{k,k-l}$ remain uniformly bounded and positive definite over the prescribed horizon \cite{jazwinski2007stochastic}.
 
\begin{remark}
For $\Delta \A = 0$, the nominal system observability and controllability matrices reduces to
\begin{equation*}
    \begin{aligned}
O_{k,k-l} &= \sum_{i=k-l}^{k} \A_{i,k}^\top \Hb_{m_k}^\top (R_i^{m_k})^{-1}\Hb_{m_k} \A_{i,k}, \\
C_{k,k-l} &= \sum_{i=k-l}^{k-1} \A_{k,i+1}^\top \bar{Q}_{i} \A_{k,i+1}. 
    \end{aligned}
\end{equation*}
Furthermore, the boundedness and positive definiteness of $\OM_{k,k-l}$ and $\C_{k,k-l}$ imply the corresponding properties for $O_{k,k-l}$ and $C_{k,k-l}$.
\end{remark}

We now investigate the uniformly boundedness of the posterior error covariance associated with the proposed RSKF. Since the filter is developed for the RRIPS, it suffices to analyze the covariance matrix $\Pb_{k|k}$ of the transformed system. Furthermore, it is known that $\Pb_{k|k}$ remain positive definite for all $k \ge 0$ under standard filtering assumptions \cite{jazwinski2007stochastic}.

\begin{thm}
Assume that the system \eqref{mcsys} is uniformly completely observable and uniformly completely controllable, with $\Pb_0 > 0$. Furthermore, let the RS parameter be chosen such that $2\mu_{1,k-1}\Pb_{k-1|k-1}<I$. Then the posterior error covariance matrix $\Pb_{k|k}$ remains uniformly bounded for all $k \geq 1$, provided that the uncertainty satisfies $-I < O_{k,k-l}^{-1} \Delta O_{k,k-l} < I$, where $\Delta O_{k,k-l} = \sum \limits _{i=k-l}^{k}\A_{i,k}^\top \Hb_{m_k}^\top (R_i^{m_k})^{-1}\Hb_{m_k} \Delta \A_{i,k}$.
\end{thm}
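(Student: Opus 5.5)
The plan follows Jazwinski's classical argument for uniform boundedness of the Kalman covariance, adapted to the risk-sensitive recursion of Theorems \ref{theorem1} and \ref{theorem4}. We work in information form.

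\textbf{Step 1: information form of the recursion.} Write $\Pb_{k|k}^{-1}=\Pb_{k|k-1}^{-1}+\Hb_{m_k}^\top (R_k^{m_k})^{-1}\Hb_{m_k}$, which is equivalent to \eqref{postxhat} via the matrix inversion lemma. The prediction step \eqref{prexhat} uses $\Omega_{k-1}^{-1}=(\Pb_{k-1|k-1}^{-1}-2\mu_{1,k-1}I)^{-1}$. By the hypothesis $2\mu_{1,k-1}\Pb_{k-1|k-1}<I$, this matrix is positive definite and satisfies $\Omega_{k-1}^{-1}\ge \Pb_{k-1|k-1}$. So the RS prediction is a standard Kalman prediction started from an inflated, but still finite, covariance.

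\textbf{Step 2: suboptimal comparison estimator.} The optimal filter's posterior covariance is dominated by that of any linear unbiased estimator using the same window of data. As the comparison estimator we take the weighted least-squares estimate of $\X_k$ built only from $y_{k-l}^{m_{k-l}},\dots,y_k^{m_k}$. We express each $y_i^{m_i}$ through $\X_k$ using the backward transition $\bar{\AC}_{i,k}$ of the true (uncertain) system \eqref{mcsys}; the filter itself uses $\A_{i,k}$. The resulting error covariance has the form
\[
\OM_{k,k-l}^{-1}+\OM_{k,k-l}^{-1}\Big(\textstyle\sum \cdots \bar{\AC}_{\cdot}\bar{Q}\bar{\AC}_{\cdot}^\top\cdots\Big)\OM_{k,k-l}^{-1}.
\]
Using \eqref{k1k2}, this is bounded by $\frac{1}{k_1}I+\frac{k_2^2 k_4}{k_1^2}I$ (or a similar constant). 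Uniform controllability, via the lower bound $k_3$, is used as in Jazwinski to guarantee $\Pb_{k|k}>0$ and to keep the inverses meaningful. The $\mu_{1}$-inflation inside the window must also be bounded. For this we iterate $\Omega^{-1}\le (1-2\mu_{1}\lambda_{\max}(\Pb))^{-1}\Pb$; by the hypothesis this factor is finite at each step, and the window length $l$ is fixed.

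\textbf{Step 3: model mismatch (main obstacle).} The filter gain is computed with the nominal $\A$, so the information actually accumulated is the mixed Gramian $O_{k,k-l}+\Delta O_{k,k-l}$ plus a second-order term, rather than $\OM_{k,k-l}$. We expand
\[
\bar{\AC}_{i,k}^\top\Hb^\top R^{-1}\Hb\bar{\AC}_{i,k}=\A_{i,k}^\top(\cdot)\A_{i,k}+\A_{i,k}^\top(\cdot)\Delta\A_{i,k}+\cdots
\]
and factor $O_{k,k-l}+\Delta O_{k,k-l}=O_{k,k-l}\,(I+O_{k,k-l}^{-1}\Delta O_{k,k-l})$. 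The hypothesis $-I<O_{k,k-l}^{-1}\Delta O_{k,k-l}<I$ makes $I+O_{k,k-l}^{-1}\Delta O_{k,k-l}$ invertible, so its inverse exists. Its norm is bounded through a Neumann series, provided the strict inequality holds uniformly in $k$; I expect to need to assume a uniform margin $\|O^{-1}\Delta O\|\le\delta<1$. This is the delicate point: the stated condition is only a pointwise two-sided matrix bound on a possibly nonsymmetric product. I would first symmetrize by working with $O^{-1/2}\Delta O\, O^{-1/2}$ and then invoke the equivalence of spectra. Combining the results, $\Pb_{k|k}\le c\,I$ for $k\ge l$ with $c$ independent of $k$.

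\textbf{Step 4: the initial interval $1\le k<l$.} On this finite initial interval, boundedness follows directly by propagating $\Pb_0>0$ through finitely many steps of \eqref{prexhat}–\eqref{postxhat}, using the boundedness of $\A$, $\Delta\A$, $\bar{Q}_k$ and the $\mu_1$-condition. Together with Step 3 this gives uniform boundedness for all $k\ge1$.
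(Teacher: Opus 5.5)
The paper gives no argument beyond a citation, and your Jazwinski-style outline is the natural template. As written, though, it has two genuine gaps.

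\textbf{Gap 1: the comparison in Step 2 does not bound $\Pb_{k|k}$, and the $\mu$-condition gives no uniform bound.} The principle that the filter covariance is dominated by the error covariance of any linear unbiased estimator belongs to the minimum-variance Kalman filter. Here $\Omega_{k-1}^{-1}\ge\Pb_{k-1|k-1}$ and the Riccati update is monotone, so the risk-sensitive $\Pb_{k|k}$ lies \emph{above} the Kalman covariance. A suboptimal-estimator comparison therefore bounds the wrong object. Your patch, $\Omega_{k-1}^{-1}\le(1-2\mu_{1,k-1}\lambda_{\max}(\Pb_{k-1|k-1}))^{-1}\Pb_{k-1|k-1}$, gives a factor that is finite for each $k$ but not bounded in $k$, and fixing $l$ does not change this. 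You saw that the condition on $O_{k,k-l}^{-1}\Delta O_{k,k-l}$ needs a uniform margin; the $\mu$-condition needs one too, and this is not a technicality. Take a top eigenvector $v$ of $\Pb_{k-1|k-1}$ with $\A v\neq0$ and $\Hb_{m_k}\A v=0$. Pushing $2\mu_{1,k-1}\lambda_{\max}(\Pb_{k-1|k-1})$ toward $1$ makes $\Pb_{k|k-1}$ arbitrarily large along $\A v$, and the update cannot remove that. A concrete case is the recursion \eqref{prexhat}, \eqref{postxhat} with $\A=I_2$, $\bar{Q}_k=\operatorname{diag}(100,0.01)$, $\Hb=I_2$, $R_k=I_2$, $q'=1$, $\Delta\A=0$. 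It is uniformly completely observable and controllable if the Gramian uses the rows $\Hb_{m_i}$. After a short transient, $e_1$ is the top eigenvector right after row $e_1^\top$ is used, and the next row is $e_2^\top$. Doing this at a sparse sequence of such times, with the margin tending to zero, gives $\sup_k\|\Pb_{k|k}\|=\infty$. The fix is a hypothesis $2\mu_{1,k-1}\Pb_{k-1|k-1}\le(1-\eta)I$. Then $\Omega_{k-1}^{-1}\le\eta^{-1}\Pb_{k-1|k-1}$, and induction with monotonicity gives $\Pb_{k|k}\le\tilde P_{k|k}$. Here $\tilde P_{k|k}$ is the ordinary Kalman covariance for $(\eta^{-1/2}\A,\Hb_{m_k},\bar{Q}_k,R_k^{m_k})$. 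Its Gramians lie within factors $\eta^{\pm l}$ of the nominal ones, so Jazwinski's bound applies verbatim.

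\textbf{Gap 2: Step 3 is disconnected from Step 2.} The recursion producing $\Pb_{k|k}$ uses only $\A$, $\Hb_{m_k}$, $\bar{Q}_k$, $R_k^{m_k}$ and $\mu_{1,k-1}$; $\Delta\A$ never enters it. A minimum-variance comparison works only within one model, so an estimator built from the true transitions $\bar{\AC}_{i,k}$ says nothing about this nominal recursion. If it did, your bound via $\OM_{k,k-l}\ge k_1I$ would already finish the proof, and the hypothesis on $O_{k,k-l}^{-1}\Delta O_{k,k-l}$ would go unused. Your claim that the filter accumulates $O_{k,k-l}+\Delta O_{k,k-l}$ is asserted, not derived. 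Expanding gives $\OM_{k,k-l}=O_{k,k-l}+\Delta O_{k,k-l}+\Delta O_{k,k-l}^\top+\sum_i\Delta\A_{i,k}^\top\Hb_{m_k}^\top(R_i^{m_k})^{-1}\Hb_{m_k}\Delta\A_{i,k}$. The nonsymmetric $O_{k,k-l}+\Delta O_{k,k-l}$ appears only as the cross-moment of a nominal least-squares estimator fed true-model data. That estimator has bias $O_{k,k-l}^{-1}\Delta O_{k,k-l}\X_k$, so its mean-square error involves the state, not just noise statistics.

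You must decide which object you are bounding. For the filter-computed $\Pb_{k|k}$, you need uniform observability and controllability of the nominal model plus the margin above, and $\Delta\A$ plays no role. For $\E[e_{k|k}e_{k|k}^\top]$ under mismatch, no optimality comparison exists. You must analyze the error recursion directly, including its forcing term $\Delta\A\,\X_{k-1}$ and the assumed state bound. Finally, $\A=\operatorname{diag}(A_1,0)$ is singular whenever $E$ is, so the backward transitions $\A_{i,k}$, $i<k$, do not exist. Both $O_{k,k-l}$ and Jazwinski's argument rely on them, so you need an explicit invertibility assumption or a forward-time version of the argument.
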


\begin{proof} 
Considering the assumptions stated above and following the Appendix B of \cite{tiwari2022risk}, the above theorem can be proved.
\end{proof}

\section{Simulation Results}\label{sec:simu}
Consider the discrete-time stochastic singular system described by \eqref{sls}, where the associated system matrices are given by
\begin{equation*}
\begin{aligned}
E &= \begin{bmatrix} 1 & 0 & 0 \\ 
    0 & 1 & 0 \\
    0 & 0 & 0 \end{bmatrix}, ~
A = \begin{bmatrix} 0.99 & 0.01 & 0 \\
0 & 0.99 & 0 \\
1 & -1 & -1 \end{bmatrix},~B = \begin{bmatrix} 0.04 \\ 0.02 \\ 0.05 \end{bmatrix}, ~\text{and} \\
H &= \begin{bmatrix} 1 & 0 & 1 \\
0 & 1 & 0 \end{bmatrix}.
\end{aligned}
\end{equation*}
It can be readily verified that $E$ is singular and the matrix pair $(E,A)$ satisfies the regularity condition. The model uncertainty is taken as $\Delta A = \operatorname{diag}(\delta,\delta,\delta)$, where $\delta$ represents the uncertainty level. The process and measurement noise, denoted by $w_k$ and $v_k$, are assumed to be mutually independent, zero mean Gaussian sequences with covariance matrices $Q = 0.09$ and $R = \operatorname{diag}(0.5,0.5)$, respectively. The initial state vector is chosen as $x_0 = [ 1 \; 3 \; -2]^\top$. Applying the transformation procedure outlined in Section \ref{subsec:systrans} yields the transformed state components $x_{1,k}$ and $x_{2,k}$. Subsequently, by augmenting these components, the singular system can be represented in the augmented form \eqref{asys}. For the implementation of the proposed filter, the initial estimates of the transformed states are generated from a standard Gaussian distribution while the initial error covariance matrix is selected as $\Pb_{0|0} = \operatorname{diag}(9,1,4)$. 

To validate the performance of the proposed RSKF, comparisons are carried out with the standard KF using the root mean square error (RMSE) and the time averaged mean square error (Avg-MSE) as evaluation metrics. All simulation results are obtained from $500$ Monte Carlo runs. The RS parameter $\mu_{1,k-1}$, is selected adaptively so that it remains below the smallest positive root of $| \Pb^{-1}_{k-1|k-1} - 2\mu_{1,k-1} I| = 0$, thereby ensuring that $(\Pb_{k-1|k-1}^{-1}-2\mu_{1,k-1}I)$ remains positive definite and that the covariance recursion is well defined.

Figs.~\ref{fig:1} and \ref{fig:2} present the RMSE trajectories of the first and second state components for the uncertainty level $\delta = 0.01$. It can be observed that the proposed RSKF produces consistently smaller RMSE values than the standard KF and demonstrating its enhanced estimation capability in the presence of modeling uncertainty. To further examine the robustness of the proposed approach, the Avg-MSE is computed for $\delta \in [-0.01, 0.01]$, and the corresponding results are shown in Fig.~\ref{fig:3}. As the uncertainty magnitude increases, the RSKF maintains lower Avg-MSE values than the KF, whereas the performance of both filters becomes nearly indistinguishable when $\delta$ is close to zero. Consequently, these results highlight the improved robustness and estimation performance of the proposed RSKF under uncertain system dynamics.

\begin{figure}[H]
\centering
\includegraphics[width = 0.45\textwidth]{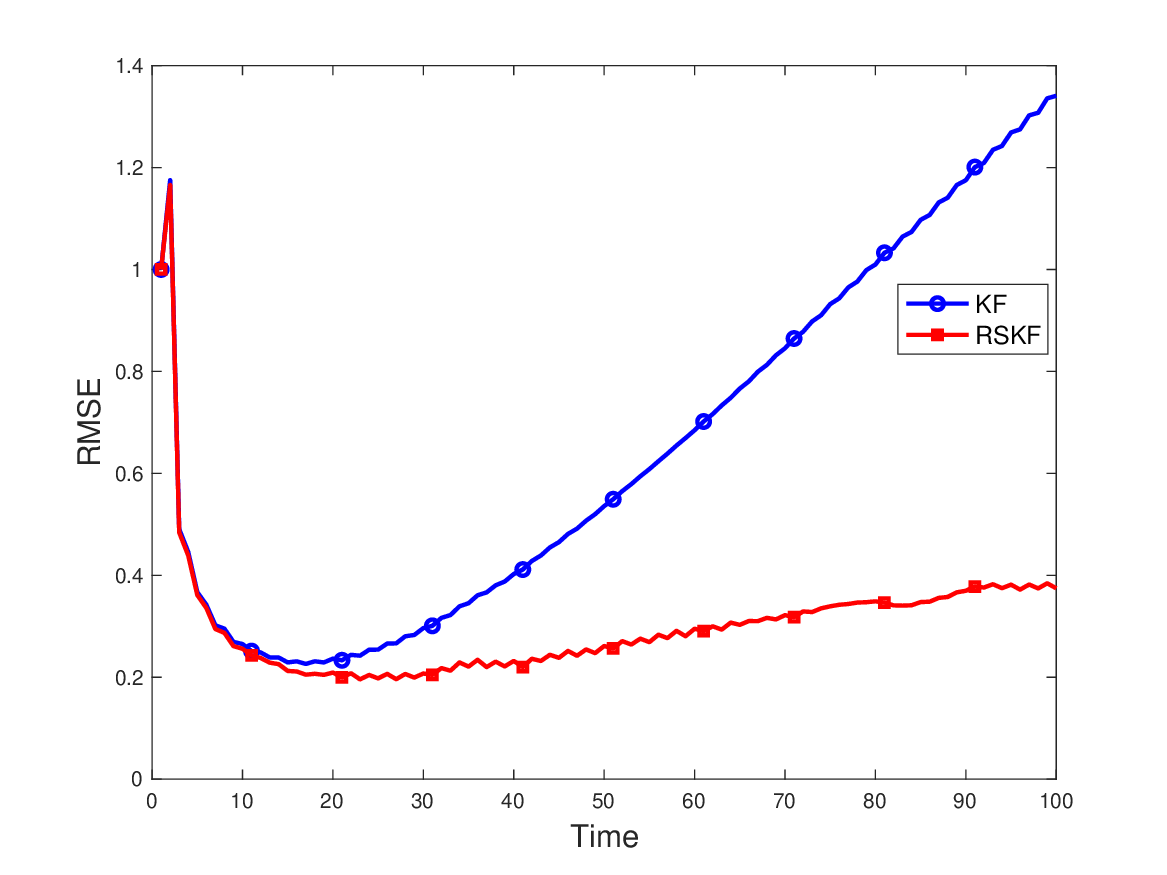}
\caption{RMSE comparison for first state of $x$}
\label{fig:1}
\end{figure}

\begin{figure}[H]
\centering
\includegraphics[width = 0.5\textwidth]{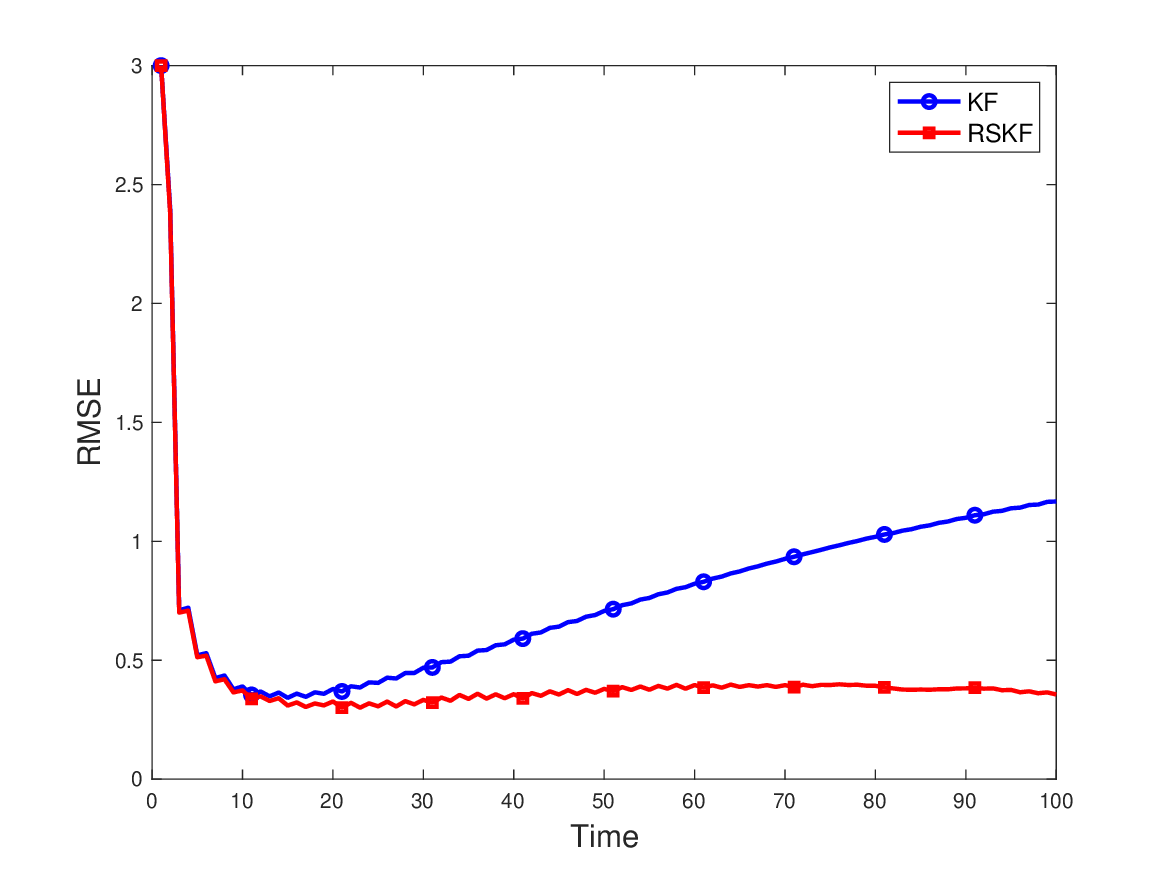}
\caption{RMSE comparison for second state of $x$}
\label{fig:2}
\end{figure}

\begin{figure}[H]
\centering
\includegraphics[width = 0.5\textwidth]{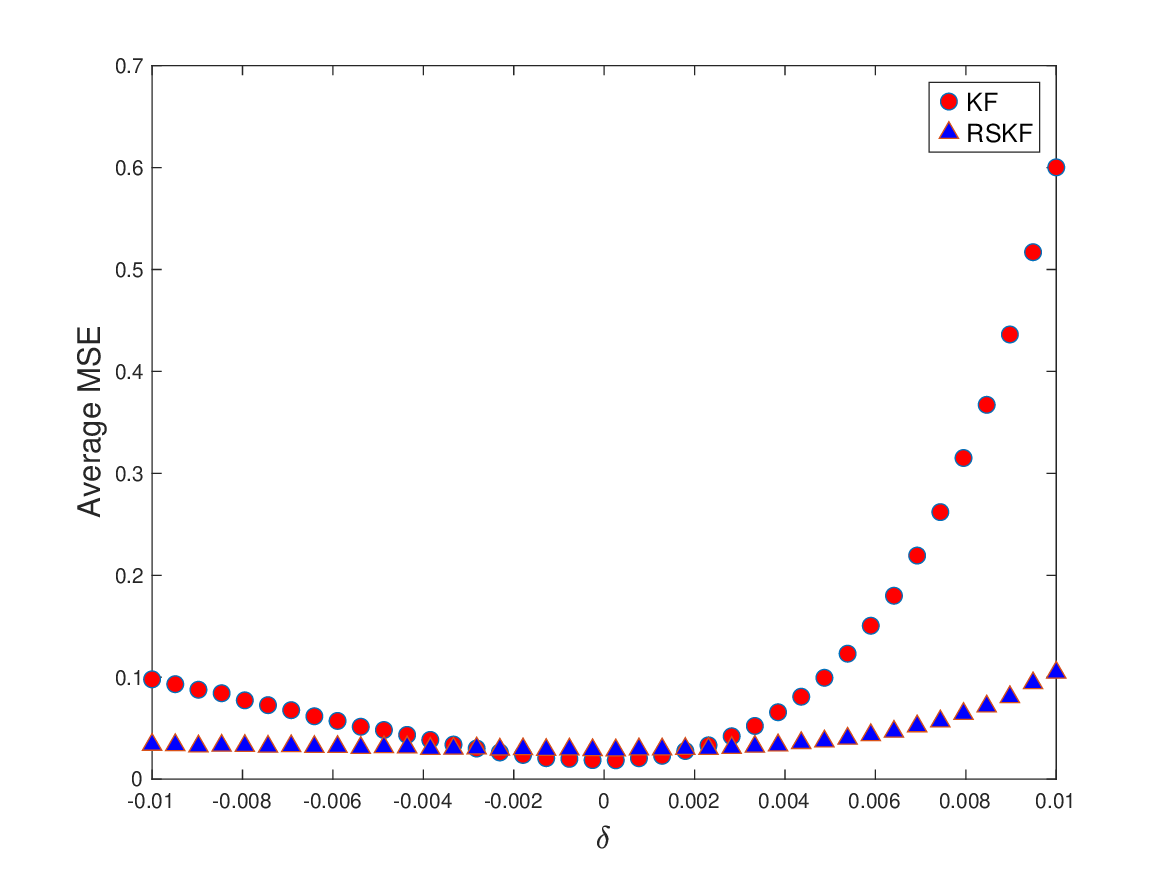}
\caption{Average MSE comparison for first state of $x$}
\label{fig:3}
\end{figure}

\section{Conclusion}\label{sec:concl}
This paper has developed a RS Kalman filtering framework for linear discrete-time networked singular systems operating under RRP. By employing the WCF, the singular system is transformed into an equivalent RRIPS, which enables the derivation of recursive RS filtering equations through a Bayesian formulation and an exponential quadratic cost criterion. An adaptive RS mechanism is further introduced to enhance robustness against modeling uncertainties while preserving the positive definiteness of the predicted covariance matrix. Moreover, observability and stability properties of the proposed filer are established using the uniformly complete observability and controllability criteria. Numerical results demonstrated that the proposed RSKF provides improved robustness and estimation performance compared with the standard KF under communication constraints and model uncertainty. Future research will focus on extending the proposed framework to networked singular systems under more general communication constraints such as one-step or finite-step packet dropouts, random measurement delays, and missing measurements.

\bibliographystyle{ieeetr}       
\bibliography{autosam}

\end{document}